\begin{document}

\newtheorem{theorem}[subsection]{Theorem}
\newtheorem{proposition}[subsection]{Proposition}
\newtheorem{lemma}[subsection]{Lemma}
\newtheorem{corollary}[subsection]{Corollary}
\newtheorem{conjecture}[subsection]{Conjecture}
\newtheorem{prop}[subsection]{Proposition}
\newtheorem{defin}[subsection]{Definition}

\numberwithin{equation}{section}
\newcommand{\mr}{\ensuremath{\mathbb R}}
\newcommand{\mc}{\ensuremath{\mathbb C}}
\newcommand{\dif}{\mathrm{d}}
\newcommand{\intz}{\mathbb{Z}}
\newcommand{\ratq}{\mathbb{Q}}
\newcommand{\natn}{\mathbb{N}}
\newcommand{\comc}{\mathbb{C}}
\newcommand{\rear}{\mathbb{R}}
\newcommand{\prip}{\mathbb{P}}
\newcommand{\uph}{\mathbb{H}}
\newcommand{\fief}{\mathbb{F}}
\newcommand{\majorarc}{\mathfrak{M}}
\newcommand{\minorarc}{\mathfrak{m}}
\newcommand{\sings}{\mathfrak{S}}
\newcommand{\fA}{\ensuremath{\mathfrak A}}
\newcommand{\mn}{\ensuremath{\mathbb N}}
\newcommand{\mq}{\ensuremath{\mathbb Q}}
\newcommand{\half}{\tfrac{1}{2}}
\newcommand{\f}{f\times \chi}
\newcommand{\summ}{\mathop{{\sum}^{\star}}}
\newcommand{\chiq}{\chi \bmod q}
\newcommand{\chidb}{\chi \bmod db}
\newcommand{\chid}{\chi \bmod d}
\newcommand{\sym}{\text{sym}^2}
\newcommand{\hhalf}{\tfrac{1}{2}}
\newcommand{\sumstar}{\sideset{}{^*}\sum}
\newcommand{\sumprime}{\sideset{}{'}\sum}
\newcommand{\sumprimeprime}{\sideset{}{''}\sum}
\newcommand{\sumflat}{\sideset{}{^\flat}\sum}
\newcommand{\shortmod}{\ensuremath{\negthickspace \negthickspace \negthickspace \pmod}}
\newcommand{\V}{V\left(\frac{nm}{q^2}\right)}
\newcommand{\sumi}{\mathop{{\sum}^{\dagger}}}
\newcommand{\mz}{\ensuremath{\mathbb Z}}
\newcommand{\leg}[2]{\left(\frac{#1}{#2}\right)}
\newcommand{\muK}{\mu_{\omega}}
\newcommand{\thalf}{\tfrac12}
\newcommand{\lp}{\left(}
\newcommand{\rp}{\right)}
\newcommand{\Lam}{\Lambda_{[i]}}
\newcommand{\lam}{\lambda}
\newcommand{\af}{\mathfrak{a}}
\newcommand{\sw}{S_{[i]}(X,Y;\Phi,\Psi)}
\newcommand{\lz}{\left(}
\newcommand{\pz}{\right)}
\newcommand{\bfrac}[2]{\lz\frac{#1}{#2}\pz}
\newcommand{\odd}{\mathrm{\ primary}}
\newcommand{\even}{\text{ even}}
\newcommand{\res}{\mathrm{Res}}
\newcommand{\sumn}{\sumstar_{(c,1+i)=1}  w\left( \frac {N(c)}X \right)}
\newcommand{\lab}{\left|}
\newcommand{\rab}{\right|}
\newcommand{\Go}{\Gamma_{o}}
\newcommand{\Ge}{\Gamma_{e}}
\newcommand{\M}{\widehat}
\def\su#1{\sum_{\substack{#1}}}

\theoremstyle{plain}
\newtheorem{conj}{Conjecture}
\newtheorem{remark}[subsection]{Remark}

\newcommand{\pfrac}[2]{\left(\frac{#1}{#2}\right)}
\newcommand{\pmfrac}[2]{\left(\mfrac{#1}{#2}\right)}
\newcommand{\ptfrac}[2]{\left(\tfrac{#1}{#2}\right)}
\newcommand{\pMatrix}[4]{\left(\begin{matrix}#1 & #2 \\ #3 & #4\end{matrix}\right)}
\newcommand{\ppMatrix}[4]{\left(\!\pMatrix{#1}{#2}{#3}{#4}\!\right)}
\renewcommand{\pmatrix}[4]{\left(\begin{smallmatrix}#1 & #2 \\ #3 & #4\end{smallmatrix}\right)}
\def\en{{\mathbf{\,e}}_n}

\newcommand{\ppmod}[1]{\hspace{-0.15cm}\pmod{#1}}
\newcommand{\ccom}[1]{{\color{red}{Chantal: #1}} }
\newcommand{\acom}[1]{{\color{blue}{Alia: #1}} }
\newcommand{\alexcom}[1]{{\color{green}{Alex: #1}} }
\newcommand{\hcom}[1]{{\color{brown}{Hua: #1}} }

\makeatletter
\def\widebreve{\mathpalette\wide@breve}
\def\wide@breve#1#2{\sbox\z@{$#1#2$}%
     \mathop{\vbox{\m@th\ialign{##\crcr
\kern0.08em\brevefill#1{0.8\wd\z@}\crcr\noalign{\nointerlineskip}%
                    $\hss#1#2\hss$\crcr}}}\limits}
\def\brevefill#1#2{$\m@th\sbox\tw@{$#1($}%
  \hss\resizebox{#2}{\wd\tw@}{\rotatebox[origin=c]{90}{\upshape(}}\hss$}
\makeatletter

\title[Upper bounds for moments of zeta sums]{Upper bounds for moments of zeta sums}

%%\date{\today}

\author[P. Gao]{Peng Gao}
\address{School of Mathematical Sciences, Beihang University, Beijing 100191, China}
\email{penggao@buaa.edu.cn}

\begin{abstract}
We establish upper bounds for moments of zeta sums using results on shifted moments of the Riemann zeta function under the Riemann hypothesis.
\end{abstract}

\maketitle

\noindent {\bf Mathematics Subject Classification (2010)}: 11M06  \newline

\noindent {\bf Keywords}:  zeta sums, moments

\section{Introduction}\label{sec 1}

   Character sums have been extensively studied in the literature as they have many important applications in number theory. In \cite{Harper23}, A. J. Harper studied sizes of the sums given by
$$ \sum_{n \leq x} n^{it} \;\;\;\;\; \text{and} \;\;\;\;\; \sum_{n \leq x} \chi(n) , $$    
    where $t \in \mr$ and $\chi(n)$ is a non-principal Dirichlet character modulo a large prime $r$.  Following the notation in \cite{Harper23}, we shall refer the first sum above as a zeta sum.  
    
   Building on his work concerning moments of random multiplicative functions, Harper \cite{Harper23} showed that the low moments of zeta sums (and also character sums) have ``better than squareroot cancellation". More precisely, he proved that uniformly for $1\le x\le T$ and $0\le k\le 1$, 
$$\frac{1}{T}\int_0^T\Big|\sum_{n\le x}n^{i t}\Big|^{2k}\dif t\ll\bigg(\frac{x}{1+(1-k)\sqrt{\log\log (10L_T)}}\bigg)^k,$$
where $L_T=\min\{x,T/x\}.$ 

   In \cite{Szab}, B. Szab\'o obtained sharp upper bounds on shifted moments of Dirichlet $L$-function at points on the critical line and then applied the results to show under the generalized Riemann hypothesis (GRH) that for a fixed real number $k>2$ and a large integer $q$, we have for $2 \leq Y \leq q^{1/2}$, 
\begin{align}
\label{genJacobi}
  \sum_{\chi\in X_q^*}\bigg|\sum_{n\leq Y} \chi(n)\bigg|^{2k} \ll_k \phi(q) Y^k(\log Y)^{(k-1)^2},
\end{align}   
   where $X_q^*$ denotes the set of primitive Dirichlet characters modulo $q$ and $\phi$ denotes Euler's totient function. A similar result is given in \cite{G&Zhao2024} for moments of quadratic Dirichlet character sums under GRH. 
   
  We note that the zeta sums behave very much like character sums. In fact, other than periodicity, the function $n \mapsto n^{-it}$ for a fixed $t \in \mr$ is totally multiplicative and is unimodular.   Thus, one expects to establish results analogous to \eqref{genJacobi} for moments of zeta sums and it is the aim of this paper to achieve this. For this, we define for real numbers $m, T, Y>0$, 
\begin{align*}
%%\label{genJacobi}
  S_m(T, Y):=\int^{2T}_T\bigg|\sum_{n\leq Y} n^{-it}\bigg|^{2m}dt. 
\end{align*} 
 
   We are interested in bounding $S_m(T, Y)$ from the above. We first observe that as pointed out in \cite{Harper23} that using \cite[Lemma 1.2]{Ivic} that when $t$ is large and $x \geq t$, we have $\sum_{x < n \leq 2x} n^{it} = \frac{(2x)^{1+it} - x^{1+it}}{1+it} + O(1) \ll x/t$. Moreover, note that by  \cite[Chap. 7,(34)]{Montgomery94}, we have $\sum_{n \leq x} n^{it} \ll t^{1/2}\log t$ when $x \leq t$. As the term $x/t$ dominates $t^{1/2}\log t$ when $x \geq t^{3/2}\log t$, we deduce that when $T$ is large enough and $Y \geq T^{3/2}\log T$, we have for any real $m>0$, 
\begin{align*}
%%\label{genJacobi}
  S_m(T, Y) \ll_k T^{1-2m} Y^{2m}. 
\end{align*}
 
   We may therefore focus on the case $Y<T^{3/2}\log T$. In fact, we shall assume that $Y \leq (1-\varepsilon)T$ for any $\varepsilon>0$ throughout the paper as this
    is often the most interesting case regarding character sums.  For this case, we  establish the following result concerning the size of $S_m(T, Y)$ under the Riemann 
    hypothesis (RH). 
\begin{theorem}
\label{quadraticmean}
With the notation as above and assume the truth of RH. For any real number $m>2$, large real numbers $T, Y$ such that $Y \leq (1-\varepsilon)T$  for any $\varepsilon>0$, we have
\begin{align}
\label{mainestimation}
 S_m(T,Y)  \ll TY^m(\log T)^{(m-1)^2}.
\end{align}
\end{theorem}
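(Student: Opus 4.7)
The plan is to adapt the approach used by Szab\'o in \cite{Szab} for Dirichlet character sums, replacing the shifted moment input for Dirichlet $L$-functions by its counterpart for $\zeta$ on the critical line. In outline: I will first express the partial sum $\sum_{n\le Y}n^{-it}$ as a contour integral of $\zeta(s+it)$ through Mellin inversion, then shift the contour to the critical line using RH, and finally bound the resulting $2m$-th moment by expanding the product and invoking sharp shifted moment bounds for $\zeta$.

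To begin, I would smooth the sum: fix a smooth $\Phi$ with $\Phi(x)=1$ for $x\le 1$ and $\Phi(x)=0$ for $x\ge 1+\delta$, with $\delta=T^{-A}$ for some large $A$; the Mellin transform $\widetilde{\Phi}$ then has super-polynomial decay on vertical lines, and the truncation error can be absorbed into a lower-order term via a short second-moment estimate. The smoothed sum admits the Mellin representation
$$\sum_n\Phi(n/Y)n^{-it}=\frac{1}{2\pi i}\int_{(c)}\widetilde{\Phi}(s)\,Y^s\,\zeta(s+it)\,ds$$
for any $c>1$. Assuming RH, I can shift the contour to $\mathrm{Re}\,s=\half$; the only pole crossed is the simple pole of $\zeta(s+it)$ at $s=1-it$, but its residue $\widetilde{\Phi}(1-it)Y^{1-it}$ is negligible because $\widetilde{\Phi}$ has super-polynomial decay. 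This yields the critical-line representation
$$\sum_n\Phi(n/Y)\,n^{-it}=\frac{Y^{1/2}}{2\pi}\int_{-\infty}^{\infty}\widetilde{\Phi}(\half+iv)\,Y^{iv}\,\zeta(\half+i(t+v))\,dv+O(T^{-A}).$$

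Next, I would raise this to the $2m$-th power, expand into a $2m$-fold integral over shifts $v_1,\ldots,v_m,w_1,\ldots,w_m$, interchange the $t$-integration with the $(\mathbf v,\mathbf w)$-integration, and invoke a sharp shifted moment bound for $\zeta$ under RH (of the Chandee--Soundararajan type, as refined by Harper and Szab\'o). This reduces the question to estimating an integral of the shape
$$Y^m\int\cdots\int\prod_{j=1}^m\bigl|\widetilde{\Phi}(\half+iv_j)\,\widetilde{\Phi}(\half+iw_j)\bigr|\cdot T\cdot W(\mathbf v,\mathbf w;T)\,d\mathbf v\,d\mathbf w,$$
where $W(\mathbf v,\mathbf w;T)$ is the product of factors of the form $\min(\log T,1/|v_j-v_k|)$ and $\min(\log T,1/|w_j-w_k|)$ paired against factors of the form $\min(\log T,1/|v_j-w_k|)^{-1}$ arising from the shifted moment bound. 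The super-polynomial decay of $\widetilde{\Phi}$ effectively restricts the integration region to a bounded cube in $(\mathbf v,\mathbf w)$.

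The main obstacle I anticipate is this final step. Crudely bounding $|\zeta|^{2m}$ pointwise would yield only $(\log T)^{m^2}$; capturing the additional saving of $(\log T)^{2m-1}$ needed to reach the sharp exponent $(m-1)^2$ requires a dyadic decomposition of the shift space according to the pairwise distances $|v_j-v_k|$, $|w_j-w_k|$, and $|v_j-w_k|$, together with a careful exploitation of the off-diagonal cancellation encoded in the shifted moment bound, mirroring Szab\'o's combinatorial analysis in the character-sum case. Once this accounting is in place, the factor $Y^m$ coming from $(Y^{1/2})^{2m}$ combined with the logarithmic savings produces the claimed bound $TY^m(\log T)^{(m-1)^2}$.
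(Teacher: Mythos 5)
Your outline matches the paper's approach in broad strokes (smooth, Mellin-invert, shift to the critical line, use Curran's RH shifted-moment bound for $\zeta$, then do a dyadic/combinatorial decomposition of the shift variables in the spirit of Szab\'o). However, there is a concrete error in the setup that breaks the argument.

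\textbf{The smoothing parameter $\delta = T^{-A}$ is incompatible with the rest of the plan.} If $\Phi$ transitions from $1$ to $0$ over a window of width $\delta=T^{-A}$, then $\Phi^{(j)}\ll \delta^{-j}$, and integration by parts gives only $\widetilde\Phi(s)\ll \delta^{-(j-1)}|s|^{-j}$. The decay therefore does not kick in until $|\Im s|\gg 1/\delta = T^{A}$; for $|\Im s|\ll T^A$ the transform behaves like $1/|s|$, with no super-polynomial saving. Your claim that ``the Mellin transform $\widetilde\Phi$ then has super-polynomial decay on vertical lines'' and that ``the super-polynomial decay of $\widetilde\Phi$ effectively restricts the integration region to a bounded cube in $(\mathbf v,\mathbf w)$'' are therefore both false with this choice. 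Consequently, after expanding, the shift variables must range over $|v_j|,|w_j|\lesssim T^A$, far outside the admissible window $|b_j|\leq (1-\varepsilon)T$ in Curran's theorem, and the plan collapses. The paper instead takes the transition width to be $(\log T)^{-C}$; this makes the truncation error small enough (via a Cauchy--Schwarz/mean-value argument, Lemma \ref{fdiff}) while confining the effective support of $\widehat\Phi_U$ to $|\Im s|\ll(\log T)^{O(1)}$, which is comfortably within the range of the shifted-moment input.

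\textbf{Two further gaps.} First, even with the correct smoothing, the region of integration over the shifts is a box of side $(\log T)^{O(1)}$ with an integrand carrying a $1/(1+|v|)$ weight; this is not a fixed bounded cube, and the dyadic decomposition is genuinely needed rather than a refinement. You gesture at this but do not carry it out, whereas it is the technical heart of the paper (Proposition \ref{t3prop}), which splits the shift variable into regions $\mathcal B_j$ of geometrically growing size and carefully tracks the exponents. Second, your expansion of $\bigl|\int\cdots\bigr|^{2m}$ into a $2m$-fold integral over $v_1,\dots,v_m,w_1,\dots,w_m$ is only literal when $m$ is an integer; the theorem is for arbitrary real $m>2$, and the paper handles this by first peeling off $k=4$ integer shifts via H\"older and bounding the residual $(2m-k)$-th power of a single $\zeta$ factor. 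Without a mechanism of this type, the proposed expansion does not make sense for non-integer $m$.
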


    We note that by H\"older's inequality, we have for any real number $n>1$, 
\begin{align*}
%%\label{SHolder}
 S_m(T,Y) \ll  T^{1-1/n}(S_{mn}(T,Y))^{1/n}. 
\end{align*}
   The above together with Theorem \ref{quadraticmean} then implies that $S_m(T,Y) \ll TY^m(\log T)^{O(1)}$ for any $m>0$, upon choosing $n$ large enough. We remark here that it is shown in \cite{Harper23} that one has $S_m(T,Y) \ll T^{m+1}$, so that our result above improves upon this when $Y$ is slightly smaller than $T$. 

   Our proof of Theorem \ref{quadraticmean} follows the approaches in \cite{Szab}. A key ingredient used in the proof is a result of M. J. Curran \cite{Curran} on shifted moments of the Riemann zeta function $\zeta(s)$.

\section{Preliminaries}
\label{sec 2}

  In this section, we include some results concerning shifted moments of the Riemann zeta function. The first one is quoted from \cite[Theorem 1.1]{Curran}.
\begin{prop}
\label{t1}
 With the notation as above and assume the truth of RH. Let $k\geq 1$ be a fixed integer and $a_1,\ldots, a_{k}$ be fixed non-negative real numbers.
  Let $T$ be a large real number and let ${\bf b}=(b_1,\ldots ,b_{k})$ be a real $k$-tuple with $|b_j|\leq (1-\varepsilon)T$ for a fixed $\varepsilon>0$. Then
\begin{align*}
%%\label{Lprodbounds}
\begin{split}
\int_T^{2T}  \prod_{j = 1}^k |\zeta(\tfrac{1}{2} + i (t + b_k))|^{a_k} dt \ll T (\log T)^{(a_1^2 + \cdots + a_k^2)/4} \prod_{1\leq j < l \leq l} |\zeta(1 + i(b_j - b_l) + 1/ \log T )|^{a_j a_l/2}.
\end{split}
\end{align*}
Here the implied constant depends on $k$ and the $a_j$ but not on $T$ or the $b_j$.
\end{prop}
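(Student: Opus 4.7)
The plan is to follow the Soundararajan--Chandee--Harper method for moments of $\zeta$ on the critical line, adapted to the shifted setting. First I would apply Soundararajan's RH-based pointwise bound: for a parameter $\lambda>0$ and $2\le x\le T^2$,
\begin{align*}
\log|\zeta(\tfrac12+it)| \le \Re\sum_{p\le x}\frac{1}{p^{1/2+\lambda/\log x+it}}\frac{\log(x/p)}{\log x} + \tfrac12\Re\sum_{p\le x^{1/2}}\frac{1}{p^{1+2\lambda/\log x+2it}} + \frac{(1+\lambda)\log T}{\log x} + O(1).
\end{align*}
Applied to each factor $|\zeta(\tfrac12+i(t+b_j))|^{a_j}$ and multiplied out, this reduces matters to an exponential-moment estimate
\begin{align*}
\int_T^{2T}\exp\Bigl(\sum_{j=1}^k a_j\,\Re P(t+b_j,x)\Bigr)\dif t,
\end{align*}
where $P(\,\cdot\,,x)$ is the primes-only Dirichlet polynomial above. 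The assumption $|b_j|\le(1-\varepsilon)T$ ensures that each shifted window $[T+b_j,2T+b_j]$ has length $T$ and lies in a region where mean-value machinery for $\zeta$ remains available.

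Next I would carry out Harper's dyadic decomposition of the prime range to obtain the sharp $(a_1^2+\cdots+a_k^2)/4$ exponent. Partition the primes into blocks $I_\ell=(\exp(e^{\ell-1}),\exp(e^\ell)]$ with $e^L\approx\log\log T$, write $P=\sum_\ell P_\ell$, and split $[T,2T]$ according to the first block on which some $\Re P_\ell(t+b_j)$ is abnormally large. On each exceptional set one applies Chebyshev with a high moment of $P_\ell$; on the main set one replaces $\exp(a_j\,\Re P_\ell(t+b_j))$ by a Taylor polynomial of suitable degree and computes the resulting mixed moments by orthogonality of the characters $p^{it}$. The diagonal pairings within each $I_\ell$ contribute $\sum_j (a_j^2/4)\sum_{p\in I_\ell}p^{-1}(\log(x/p)/\log x)^2$, which sum across $\ell$ to $((a_1^2+\cdots+a_k^2)/4)\log\log T$ by Mertens. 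The cross pairings between distinct indices $j\ne l$ produce
\begin{align*}
\sum_{p\le x}\frac{\cos((b_j-b_l)\log p)}{p^{1+2\lambda/\log x}}\Bigl(\tfrac{\log(x/p)}{\log x}\Bigr)^2 = \log|\zeta(1+i(b_j-b_l)+1/\log T)| + O(1),
\end{align*}
using the standard expansion $\log\zeta(1+s)=\sum_p p^{-1-s}+O(1)$ valid for $\Re s\asymp 1/\log T$. Exponentiating and choosing $x$ to be a small fixed power of $T$ then recovers the bound asserted in Proposition \ref{t1}.

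The main obstacle is the dyadic bookkeeping: one must show that the measure of the set on which some $P_\ell(t+b_j)$ is abnormally large decays fast enough in $\ell$ to be absorbed after H\"older's inequality is applied across the $k$ shifts, and that the Taylor-cutoff degrees can be chosen uniformly in $j$ and $\ell$. This rests on the essentially Gaussian behaviour of low moments of $P_\ell$ on scales shorter than the truncation level, and transferring Harper's single-shift argument to $k$ simultaneous shifts requires care with overlapping exceptional sets but introduces no genuinely new phenomena beyond ensuring that constants remain independent of the $b_j$ throughout.
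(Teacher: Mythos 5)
This proposition is not proved in the paper at all: it is quoted verbatim from Curran's paper, \cite[Theorem~1.1]{Curran}, with a one-line remark that the stated range $|b_j|\le T/2$ can be relaxed to $|b_j|\le(1-\varepsilon)T$ by an inspection of Curran's argument. What you have written is therefore not an alternative to anything in this paper but a reconstruction of Curran's own proof. For what it is worth, the reconstruction is pointed in the right direction: Curran does indeed start from Soundararajan's RH bound for $\log|\zeta(\tfrac12+it)|$ in terms of a short Dirichlet polynomial over primes, and then runs a Harper-style tower-exponential decomposition of the prime range to upgrade the exponent $k^2+\varepsilon$ to the sharp $\sum_j a_j^2/4$, with the cross terms $j\ne l$ producing $\sum_p p^{-1-2\lambda/\log x}\cos((b_j-b_l)\log p)(\cdot)$ which is then recognized as $\log|\zeta(1+i(b_j-b_l)+1/\log T)|+O(1)$. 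Your outline captures all of these moves.

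A few points of care. First, the block parameter is stated incorrectly: with $I_\ell=(\exp(e^{\ell-1}),\exp(e^\ell)]$ one needs roughly $L\asymp\log\log T$ \emph{blocks}, so that $\exp(e^L)$ reaches a small power of $T$; your condition ``$e^L\approx\log\log T$'' would truncate the primes at about $\log T$, far too early for Soundararajan's inequality to be useful. Second, the phrase ``orthogonality of the characters $p^{it}$'' should be replaced by the actual mean-value input, namely that $\int_T^{2T}(n/m)^{it}\,dt\ll T$ for $n=m$ and $\ll\min(T,|\log(n/m)|^{-1})$ otherwise (or a Montgomery--Vaughan style bound), applied to the products of prime powers that arise when one multiplies out the Taylor polynomials; this is where one must verify that the truncation degrees keep the resulting Dirichlet polynomial shorter than $T$, and it is also where the hypothesis $|b_j|\le(1-\varepsilon)T$ enters (it keeps each shifted interval $[T+b_j,2T+b_j]$ at distance $\gg T$ from the pole of $\zeta$ and of length $T$, so the mean-value estimates are uniform). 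Third, the genuinely delicate part — bounding the exceptional sets where some $\Re P_\ell(t+b_j)$ is large, uniformly across the $k$ shifts and with constants independent of the $b_j$ — is acknowledged but not actually carried out; that is where the bulk of Curran's work lies, and the present sketch does not yet constitute a proof. None of this is a criticism the paper itself escapes, since the paper simply defers to \cite{Curran}; but if you intend this as a self-contained proof it would need those pieces filled in.
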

 
   We remark here that \cite[Theorem 1.1]{Curran} is stated for $|b_j|\leq T/2$ but an inspection of the proof indicates that it continues to hold for 
   $|b_j|\leq (1-\varepsilon)T$ with any $\varepsilon>0$. We also note that 
$$ \Big|\zeta(1+1/\log T+i\alpha)\Big| =\Big|\sum^{\infty}_{n=1}n^{-(1+1/\log T+i\alpha)}\Big | \leq \Big|\sum^{\infty}_{n=1}n^{-(1+1/\log T)}\Big|=\Big|\zeta(1+1/\log T)\Big| \ll \log T,$$
where the last estimation above follows from \cite[Corollary 1.17]{MVa1}. Also by \cite[Corollary 1.17]{MVa1}, we see that for 
$\frac{1}{\log T}\leq |\alpha| \leq 10$, we have
$$ |\zeta(1+1/\log T+i\alpha)|=\frac{1}{|1/\log T+i\alpha|} +O(1) \ll 
\frac{1}{|\alpha|}.$$  
  Moreover, by \cite[Corollary 13.16]{MVa1}, we see that for $10\leq |\alpha|\leq e^T$, we have under the RH that 
$$\log|\zeta(1+1/\log T+i\alpha)|\leq \log\log\log |\alpha|+O(1).$$

  Based on these observations, for $T$ be given as in Proposition \ref{t1}, we now introduce the function $g:\mathbb{R}_{\geq 0} \rightarrow \mathbb{R}$ defined by
\begin{align}
\label{gdef}
\begin{split}
 g(x) =\begin{cases}
\log T  & \text{if } x\leq \frac{1}{\log T} \text{ or } x \geq e^T, \\
\frac{1}{x} & \text{if }   \frac{1}{\log T}\leq x\leq 10, \\
\log \log x & \text{if }  10 \leq x \leq e^{T}.
\end{cases}
\end{split}
\end{align}
 
   The above discussions together with Proposition \ref{t1} allows us to derive the following simplified version on shifted moments of the Riemann zeta function. 
\begin{corollary}
\label{t2}
 With the notation as above and assume the truth of RH. Let $k\geq 1$ be a fixed integer and $a_1,\ldots, a_{k}$ be fixed non-negative real numbers. 
 Let $T$ be a large real number and let ${\bf b}=(b_1,\ldots ,b_{k})$ be a real $k$-tuple with $|b_j|\leq  (1-\varepsilon)T$ for a fixed $\varepsilon>0$. Then
\begin{align*}
%%\label{Lprodbounds}
\begin{split}
\int_T^{2T}  \prod_{j = 1}^k |\zeta(\tfrac{1}{2} + i (t + b_k))|^{a_k} dt \ll T (\log T)^{(a_1^2 + \cdots + a_k^2)/4} \prod_{1\leq j < l \leq l} g(|b_j - b_l|)^{a_j a_l/2}.
\end{split}
\end{align*}
Here the implied constant depends on $k, \varepsilon$ and the $a_j$ but not on $T$ or the $b_j$.
\end{corollary}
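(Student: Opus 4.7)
The plan is to derive Corollary \ref{t2} directly from Proposition \ref{t1} by showing that for every real $\alpha$ and every large $T$ one has the pointwise upper bound
\begin{align*}
|\zeta(1+1/\log T+i\alpha)| \ll g(|\alpha|),
\end{align*}
where $g$ is the piecewise function defined in \eqref{gdef}. Once this is established, substituting it into the product on the right-hand side of Proposition \ref{t1} immediately yields the claimed inequality of Corollary \ref{t2}, with the implied constant depending on $k$, $\varepsilon$ and the $a_j$ only through the constants in Proposition \ref{t1} and in the bound above.

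To verify the pointwise inequality, I would split into the four ranges appearing in the definition of $g$ and in each range invoke one of the estimates already recorded in the paragraphs preceding the statement of the corollary. For $|\alpha|\leq 1/\log T$ and $|\alpha|\geq e^T$, the crude estimate $|\zeta(1+1/\log T+i\alpha)|\leq \zeta(1+1/\log T)\ll \log T$ (obtained by comparing term by term with the Dirichlet series at $s=1+1/\log T$ and then using \cite[Corollary 1.17]{MVa1}) gives exactly $g(|\alpha|)=\log T$. For $1/\log T\leq |\alpha|\leq 10$, the estimate $|\zeta(1+1/\log T+i\alpha)|=|1/\log T+i\alpha|^{-1}+O(1)\ll 1/|\alpha|$ from \cite[Corollary 1.17]{MVa1} gives the matching upper bound $g(|\alpha|)=1/|\alpha|$. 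Finally, for $10\leq |\alpha|\leq e^T$ the RH bound $\log|\zeta(1+1/\log T+i\alpha)|\leq \log\log\log|\alpha|+O(1)$ from \cite[Corollary 13.16]{MVa1} exponentiates to $|\zeta(1+1/\log T+i\alpha)|\ll \log\log|\alpha|=g(|\alpha|)$.

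With these four cases combined, we obtain for each pair $1\leq j<l\leq k$ the bound
\begin{align*}
|\zeta(1+i(b_j-b_l)+1/\log T)|^{a_j a_l/2}\ll g(|b_j-b_l|)^{a_j a_l/2},
\end{align*}
so that multiplying over all pairs and inserting the result into Proposition \ref{t1} yields the corollary. Since this is a purely mechanical substitution, there is no real obstacle; the only point requiring mild care is to confirm that the three estimates quoted from \cite{MVa1} indeed cover all real $\alpha$ without gaps, and that the $O(1)$ constants appearing in them can be absorbed into the implied constants in $g$ without altering the shape of the piecewise function. Since $\varepsilon$ does not enter the pointwise bound for $|\zeta(1+1/\log T+i\alpha)|$, the dependence of the implied constant on $\varepsilon$ enters only through the appeal to Proposition \ref{t1}, exactly as claimed.
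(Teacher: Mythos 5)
Your proposal is correct and follows essentially the same route as the paper: the paper records exactly these three estimates for $|\zeta(1+1/\log T+i\alpha)|$ in the paragraph preceding the corollary, defines $g$ in \eqref{gdef} to package them, and then obtains Corollary \ref{t2} by direct substitution into Proposition \ref{t1}. Your case analysis, your handling of the $O(1)$ constants, and your observation that the $\varepsilon$-dependence enters only through Proposition \ref{t1} all match the intended argument.
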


   We also note the following upper bounds on moments of the Riemann zeta function, which can be obtained by modifying the proof of \cite[Theorem 1.1]{Curran}.
\begin{lemma}
\label{prop: upperbound}
With the notation as above and assume the truth of RH. Let $k\geq 1$ be a fixed integer and $a_1,\ldots, a_{k}$ be fixed non-negative real numbers. 
Let $T$ be a large real number and let ${\bf b}=(b_1,\ldots ,b_{k})$ be a real $k$-tuple with $|b_j|\leq  (1-\varepsilon)T$ for any fixed $\varepsilon>0$. Then for large real number $T$ and 
$\sigma \geq 1/2$,
\begin{align*}
%%\label{upperbound1}
   \int_T^{2T}  \prod_{j = 1}^k |\zeta(\sigma + i (t + b_k))|^{a_k} dt \ll &  T(\log T)^{O(1)}.
\end{align*}
\end{lemma}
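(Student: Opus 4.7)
The plan is to adapt Curran's proof of Proposition \ref{t1} to an arbitrary vertical line $\sigma \geq 1/2$, retaining only the qualitative dependence $(\log T)^{O(1)}$ on the shifts rather than the sharp product over $g(|b_j - b_l|)$. Morally the statement says that moving to the right of the critical line cannot hurt, and this should be recovered directly from the same upper-bound method that produces Corollary \ref{t2}.

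First I would dispose of the range $\sigma \geq 1 + 1/\log T$: here the Dirichlet series for $\zeta(s)$ converges absolutely and, by \cite[Corollary 1.17]{MVa1}, $|\zeta(\sigma + it)| \leq \zeta(1 + 1/\log T) \ll \log T$. The integrand is then bounded pointwise by $(\log T)^{a_1 + \cdots + a_k}$ and the conclusion is immediate.

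For the main range $1/2 \leq \sigma \leq 1 + 1/\log T$ I would invoke the Soundararajan-type pointwise inequality that drives Curran's proof, in the form: under RH, for every $\lambda_0 > 0$, every $x \geq 2$, and every $s = \sigma + it$ with $\sigma \geq 1/2$ and $T \leq t \leq 2T$,
$$\log|\zeta(s)| \leq \Re \sum_{2 \leq n \leq x} \frac{\Lambda(n)}{n^{s + \lambda_0/\log T} \log n} \cdot \frac{\log(x/n)}{\log x} + \frac{(1 + \lambda_0)\log T}{\lambda_0 \log x} + O(1).$$
The only property of $\sigma$ used in the derivation of this inequality (via the Hadamard factorisation of $\zeta$) is that there are no zeros in $\Re s > 1/2$, so the uniformity in $\sigma \geq 1/2$ is automatic. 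Weighting by $a_j/2$, summing over $j$, exponentiating, and choosing $x$ to be a suitable small power of $T$ dominates the integrand by an exponential of a short Dirichlet polynomial whose prime coefficients are bounded in modulus by $\sum_j a_j = O(1)$. From this point Curran's bookkeeping applies verbatim: partition the primes into blocks $I_r = (\exp(e^{r-1}), \exp(e^r)]$, truncate using Chebyshev-type inequalities, and compute the resulting exponential moments via orthogonality. Since the prime sums $\sum_{p \in I_r} p^{-2\sigma}$ controlling the variance only decrease as $\sigma$ grows beyond $1/2$, the final bound is no worse than the $\sigma = 1/2$ output of Corollary \ref{t2}, namely
$$T(\log T)^{(a_1^2 + \cdots + a_k^2)/4} \prod_{1 \leq j < l \leq k} g(|b_j - b_l|)^{a_j a_l/2}.$$
Because $|b_j - b_l| \leq 2(1-\varepsilon)T < e^T$, the piecewise definition \eqref{gdef} gives $g(|b_j - b_l|) \ll \log T$ in each of the three cases, so the product above collapses to $(\log T)^{O(1)}$ and the claimed estimate $T(\log T)^{O(1)}$ follows.

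The main obstacle is the verification that Curran's argument, originally written for $\sigma = 1/2$, really does go through uniformly on the half-plane $\sigma \geq 1/2$ with the same constants. Concretely one must check the Soundararajan inequality above off the critical line and confirm that the dyadic variance and covariance estimates remain valid; both are essentially monotonicity-in-$\sigma$ statements since the prime decay $p^{-\sigma}$ strengthens as $\sigma$ increases, but they need to be tracked carefully in order to avoid losing a spurious factor of $(\log T)^{\sigma - 1/2}$ somewhere along the way.
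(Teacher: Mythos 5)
The paper gives no proof of this lemma, only the remark that it can be obtained by modifying Curran's proof of Proposition~\ref{t1}; your proposal carries out exactly that modification (trivial pointwise bound for $\sigma \geq 1+1/\log T$, a Soundararajan-type majorization uniform in $\sigma \geq 1/2$ for the remaining range, Curran's bookkeeping, and finally $g(|b_j-b_l|)\ll \log T$ to collapse the shift-dependent product), so this is essentially the same approach. One technical caveat: the uniform-in-$\sigma$ Soundararajan inequality should shift the prime sum to $n^{\max(\sigma,\,1/2+\lambda_0/\log x)+it}$ (with $\log x$, not $\log T$, in the shift), which is precisely what delivers the monotonicity in $\sigma$ you invoke; with that correction the argument goes through.
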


  We end this section by including an estimation for an average of the moments of the Riemann zeta function.
\begin{proposition}
\label{t3prop}
 With the notation as above and assume the truth of RH. We have for any real numbers $m \geq 2$, $10 \leq E \leq (1-\varepsilon)T$ with $\varepsilon>0$ being fixed,
\begin{align}
\label{finiteintest}
\begin{split}
 &\int^{2T}_T\bigg(\int_{0}^{E}|\zeta(1/2+i(\pm s+t))|ds\bigg)^{2m}dt \\
 \ll & T\big( (\log T)^{(m-1)^2}E^3(\log \log E)^{O(1)}+(\log T)^{m^2-3m+3}E^{2m}(\log \log E)^{O(1)}(\log\log T)^{O(1)}\big).
\end{split}
\end{align}
\end{proposition}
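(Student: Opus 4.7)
The strategy is to expand the $2m$-th power of the inner integral as a $2m$-fold integral, exchange the order of integration, apply the shifted moment bound of Corollary \ref{t2}, and then carefully estimate the resulting multi-dimensional integral over the shift variables.

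First, introducing dummy variables $s_1,\ldots,s_{2m}\in[0,E]$, expanding the $2m$-th power and applying Fubini yield
\begin{align*}
\int^{2T}_T\bigg(\int_{0}^{E}|\zeta(\tfrac12{+}i(\pm s{+}t))|ds\bigg)^{2m}dt = \int_{[0,E]^{2m}}\int^{2T}_T\prod_{j=1}^{2m}|\zeta(\tfrac12{+}i(\pm s_j{+}t))|\,dt\,ds_1\cdots ds_{2m}.
\end{align*}
Since $|{\pm}s_j| \leq E \leq (1-\varepsilon)T$, Corollary \ref{t2} applies to the inner $t$-integral with $k=2m$ and $a_j=1$, giving
\begin{align*}
\int_T^{2T} \prod_{j=1}^{2m} |\zeta(\tfrac12{+}i(\pm s_j{+}t))|\,dt \ll T (\log T)^{m/2} \prod_{1 \leq j < l \leq 2m} g(|s_j - s_l|)^{1/2}.
\end{align*}
Hence the proof reduces to estimating the $2m$-dimensional integral $\mathcal{I}(E) := \int_{[0,E]^{2m}} \prod_{1 \leq j < l \leq 2m} g(|s_j - s_l|)^{1/2} ds_1 \cdots ds_{2m}$.

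To bound $\mathcal{I}(E)$, I would order the variables (absorbing a combinatorial factor $(2m)!$), substitute the consecutive gaps $\eta_i = s_{i+1}-s_i$, and partition the integration region according to which gaps fall into the three regimes of the function $g$ defined in \eqref{gdef}: \emph{short} ($\eta_i \leq 1/\log T$, where $g \equiv \log T$), \emph{moderate} ($1/\log T < \eta_i \leq 10$, where $g(x) = 1/x$), and \emph{long} ($\eta_i > 10$, where $g(x) = \log\log x$). On each piece, the pairwise factor $g(|s_j-s_l|)^{1/2} = g(\eta_j+\cdots+\eta_{l-1})^{1/2}$ can be estimated explicitly. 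The two terms in the claimed bound reflect two extreme clustering patterns: if every gap is long, all $g$-factors are $\ll(\log\log E)^{1/2}$ and the full volume $E^{2m}$ is available, producing the $E^{2m}$ term; if $2m-2$ of the $s_j$'s are tightly clustered in a small interval while two $s_j$'s remain free, the within-cluster $g$-factors contribute a large power of $\log T$ and the cluster position together with the two free variables give a total volume of $E^3$, producing the $E^3$ term.

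The main obstacle is the combinatorial bookkeeping required to show that all intermediate clustering patterns ($c$ variables clustered, $2m-c$ free, with $0 < c < 2m-2$) give contributions that are absorbed by these two extremes. The key integrability estimates $\int_0^\eta g(x)^{1/2} dx \ll \eta^{1/2}$ for $\eta \leq 10$ and $\int_0^E g(x)^{1/2} dx \ll E(\log\log E)^{1/2}$ for large $E$, used in combination with the piecewise monotonicity of $g$ on each of its three pieces, suffice to bound the contribution of each cluster configuration. The exponents $(m-1)^2$ and $m^2 - 3m + 3$ of $\log T$ will emerge after optimizing the trade-off between volume and within-cluster $g$-factors across clustering patterns, and the $(\log\log T)^{O(1)}$ factor in the second term will arise from pairs whose separation lies in the moderate regime $(1/\log T, 10)$, where $\int_{1/\log T}^{10} g(x)\,dx \sim \log\log T$.
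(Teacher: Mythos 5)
Your opening step already fails for the class of $m$ in the statement: expanding $\bigl(\int_0^E|\zeta|\,ds\bigr)^{2m}$ as a $2m$-fold integral $\int_{[0,E]^{2m}}\prod_{j=1}^{2m}(\cdots)\,ds_1\cdots ds_{2m}$ requires $2m$ to be a positive integer, while Proposition~\ref{t3prop} is stated (and is later used in Lemmas~\ref{Lsmooth} and~\ref{fdiff}) for arbitrary real $m$. The paper's proof is engineered precisely to sidestep this: it writes $\bigl(\int_0^E F\bigr)^{2m}=\bigl(\int_0^E F\bigr)^{k}\bigl(\int_0^E F\bigr)^{2m-k}$ with the small \emph{integer} $k=4$, expands only the first factor as a $k$-fold integral, and handles the remaining factor (a single integral raised to the real power $2m-k>0$) via a partition of $[0,E]$ according to the ordering of $|t_a-u|$ together with a two-fold H\"older inequality over the dyadic shells $\mathcal B_j$; see~\eqref{Lintdecomp}--\eqref{LintoverD}. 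That way only one zeta factor carries the non-integer exponent $2m-k$, and Corollary~\ref{t2} is applied with the fixed $(k+1)$-tuple of exponents $(1,\dots,1,2m-k)$ rather than with $k=2m$ copies of exponent $1$.

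A second, related problem is complexity: your reduction leads to a genuinely $2m$-dimensional combinatorial optimization over clustering patterns, which grows with $m$ and which you acknowledge is "the main obstacle." The paper's $k=4$ device freezes the dimension at $k+1=5$ regardless of $m$, so the resulting sum over shell indices $(l_0,\dots,l_{k-1})$ is a fixed five-fold sum whose convergence hinges only on $m>2$ (via the factor $e^{-(2m-4)\max(l_0,\dots,l_{k-1})}$ in~\eqref{firstcasesimplified}). Also note your heuristic identification of the configuration producing the $(\log T)^{(m-1)^2}$ term is off: if you carry out the bookkeeping in your framework, the "all $s_j$ within $1/\log T$" configuration already gives $(\log T)^{m/2}\cdot(\log T)^{m(2m-1)/2}\cdot E\,(\log T)^{-(2m-1)}=E(\log T)^{(m-1)^2}$, whereas your proposed "$2m-2$ clustered plus two free" pattern yields a strictly smaller exponent of $\log T$ for $m>2$; both are of course dominated by the claimed $E^3(\log T)^{(m-1)^2}$, but it shows the heuristics need care. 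To salvage your route for general real $m$ you would still need to interpolate between consecutive integer values of $2m$ (e.g.\ via H\"older), which in effect reintroduces a version of the paper's split $\,k + (2m-k)$.
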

\begin{proof}
  Our proof follows closely that of \cite[Proposition 3]{Szab}. Without loss of generality, we prove \eqref{finiteintest} only for the case where the sign $\pm$ in front of $s$  is $+$ in what follows.  We have by symmetry that for each fixed $t$ and any fixed integer $k \geq 1$, 
\begin{align}
\label{Lintdecomp}
    \bigg(\int_{0}^{E} |\zeta(1/2+ i(s+t))| ds\bigg)^{2m}
      \ll \int_{[0,E]^k}\prod_{a=1}^k|\zeta(1/2+ i(t_a+t))| \cdot \bigg(\int_{\mathcal{D} }|\zeta(1/2+i(u+t))|du \bigg)^{2m-k} d\mathbf{t},
\end{align}
where $\mathcal{D}=\mathcal{D}(t_1,\ldots,t_k)=\{ u\in [0,E]:|t_1-u|\leq |t_2-u|\leq \ldots \leq |t_k-u| \}$.

  We let $\mathcal{B}_1=\big[-\frac{1}{\log T},\frac{1}{\log T}\big]$ and $\mathcal{B}_j=\big[-\frac{e^{j-1}}{\log T}, -\frac{e^{j-2}}{\log T}\big]
  \cup \big[\frac{e^{j-2}}{\log T}, \frac{e^{j-1}}{\log T}\big]$ for $2\leq j< \lfloor \log \log T\rfloor+10 := K$. We further denote
  $\mathcal{B}_K=[-E,E]\setminus \bigcup_{1\leq j<K} \mathcal{B}_j$.

Observe that for any $t_1\in [0,E]$,  we have $\mathcal{D}\subset [0,E] \subset t_1+[-E,E]\subset \bigcup_{1\leq j\leq K} t_1+\mathcal{B}_j$. Thus
if we denote $\mathcal{A}_j=\mathcal{B}_j\cap (-t_1+\mathcal{D})$, then $(t_1+\mathcal{A}_j)_{1\leq j\leq K}$ form a partition of $\mathcal{D}$.
We apply Hölder's inequality twice to deduce that for $2m \geq k+1$, 
\begin{align}
\label{LintoverD}
\begin{split}
    & \bigg(\int_{\mathcal{D}}|\zeta(1/2+ i(u+t))|du\bigg)^{2m-k} \\
      \leq & \bigg( \sum_{1\leq j\leq K} \frac{1}{j}\cdot  j \int_{t_1+\mathcal{A}_j} |\zeta(1/2+i(u+t))|du  \bigg)^{2m-k} \\
     \leq & \bigg(\sum_{1\leq j\leq K} j^{2m-k} \bigg( \int_{t_1+\mathcal{A}_j} \big|\zeta(1/2+i(u+t))\big|du  \bigg)^{2m-k}\bigg)
     \bigg(\sum_{1\leq j\leq K } j^{-(2m-k)/(2m-k-1)} \bigg)^{2m-k-1} \\
     \ll & \sum_{1\leq j\leq K} j^{2m-k} \bigg( \int_{t_1+\mathcal{A}_j} |\zeta(1/2+i(u+t))|du \bigg)^{2m-k} \\
     \leq & \sum_{1\leq j\leq K} j^{2m-k} |\mathcal{B}_j|^{2m-k-1} \int_{t_1+\mathcal{A}_j} |\zeta(1/2+i(u+t))|^{2m-k}du.
\end{split}
\end{align}
  We denote for $\mathbf{t}=(t_1,\ldots,t_k)$,
$$\zeta(\mathbf{t},u)=\int^{2T}_T\prod_{a=1}^k|\zeta(1/2+i(t_a+t))| \cdot |\zeta(1/2+i(u+t))|^{2m-k} dt.$$
  We then deduce from \eqref{Lintdecomp} and \eqref{LintoverD} that
\begin{align}
\label{Lintest}
\begin{split}
  \int^{2T}_T\bigg(\int_{0}^{E}|\zeta(1/2+i(s+t))|ds\bigg)^{2m}dt\ll &
    \sum_{1\leq l_0\leq K} l_0^{2m-k} |\mathcal{B}_{l_0}|^{2m-k-1} \int_{[0,E]^k}\int_{t_1+\mathcal{A}_{l_0}} \zeta(\mathbf{t},u)du d\mathbf{t}  \\
     \ll &  \sum_{1\leq l_0, l_1, \ldots l_{k-1}\leq K} l_0^{2m-k} |\mathcal{B}_{l_0}|^{2m-k-1} \int_{\mathcal{C}_{l_0,l_1, \cdots, l_{k-1}}} \zeta(\mathbf{t},u)du d\mathbf{t},
\end{split}
\end{align}
where
$$\mathcal{C}_{l_0,l_1, \cdots, l_{k-1}}=\{(t_1,\ldots,t_k,u)\in [0,E]^{k+1}: u\in t_1+ \mathcal{A}_{l_0},\, |t_{i+1}-u|-|t_i-u|\in \mathcal{B}_{l_i}, \ 1 \leq i \leq k-1\}.$$
We now distinguish two cases in the last summation of \eqref{Lintest} according to the size of $l_0$.

\textbf{Case 1:} $l_0<K$. First note that for any fixed $u$, $t_1$ is in a fixed region of size $\ll \frac{e^{l_0}}{\log T}$. For fixed $u$ and $t_1$, $t_2$ is in a fixed region of size $\ll E\frac{e^{l_1}}{\log T}$ as $|t_2-u| \in |t_1-u|+\mathcal{B}_{l_1}$).  Similar considerations then imply that the volume of the region $\mathcal{C}_{l_0,l_1, \cdots, l_{k-1}}$ is $\ll  E^k\frac{e^{l_0+l_1+\cdots+l_{k-1}} }{(\log T)^k}$. Also,
by the definition of $\mathcal{C}_{l_0,l_1, \cdots, l_{k-1}}$ we have $\frac{e^{l_0}}{\log q}\ll |t_1-u|\ll E =(\log T)^{O(1)}$ so that $g(|t_1- u|)\ll \frac{\log T}{e^{l_0}}\log \log E$, where $g$ is the function defined in \eqref{gdef}. We deduce from the definition of $\mathcal{A}_j$ that $|t_2-u|\geq |t_1-u|$, so that $E \gg  |t_2-u|= |t_1-u|+(|t_2-u|-|t_1-u|)\gg \frac{e^{l_0}}{\log T}+\frac{e^{l_1}}{\log T}$, which implies that $g(|t_2- u|)\ll \frac{\log T}{e^{\max(l_0,l_1) }}\log \log E$.
Similarly, we have  $g(|t_i- u|)\ll \frac{\log T}{e^{\max(l_0,l_1,\ldots, l_{i-1}) }}\log \log E$ for any $1 \leq i \leq k$. 
Moreover, we have $\sum^{j-1}_{s=i}(|t_{s+1}-u|-|t_s-u|) \leq |t_j-t_i| $ for any $1 \leq i < j \leq k$, so that we have $g(|t_{j}- t_i|)\ll \frac{\log T}{e^{\max(l_i,\ldots, l_{j-1} ) } }\log \log E$. We then deduce from Theorem \ref{t1} that for $(t_1,\ldots,t_k,u)\in \mathcal{C}_{l_0,l_1, \cdots, l_{k-1}}$,
\begin{align*}
     & \zeta(\mathbf{t},u) \\
     \ll & T(\log T)^{((2m-k)^2+k)/4}(\log \log E)^{O(1)}
     \bigg(\prod^{k-1}_{i=0}\frac{\log T}{e^{ \max(l_0,l_1,\ldots, l_{i}) }} \bigg)^{(2m-k)/2}
     \bigg(\prod^{k-1}_{i=1} \prod^{k}_{j=i+1}\frac{\log T}{e^{\max(l_i,\ldots, l_{j-1} ) } } \bigg)^{1/2} \\
     = & T(\log T)^{m^2}(\log \log E)^{O(1)}  \exp\Big( -\frac {2m-k}{2}\sum^{k-1}_{i=0}\max(l_0,l_1,\ldots, l_{i})-\frac {1}{2}\sum^{k-1}_{i=1} \sum^{k}_{j=i+1}\max(l_i,\ldots, l_{j-1} )\Big).
\end{align*}
  Here, we adopt the convention throughout the paper that any empty product is defined to be $1$  and any empty sum is defined to be $0$. 
Observe that we have $ |\mathcal{B}_{l_0}|\ll \frac{e^{l_0}}{\log T}$, so that 
\begin{align}
\label{firstcase}
\begin{split}
       &  \sum_{\substack{1\leq l_0<K \\ 1\leq l_1, \ldots l_{k-1}\leq K}}  l_0^{2m-k} |\mathcal{B}_{l_0}|^{2m-k-1} \int_{\mathcal{C}_{l_0,l_1, \cdots, l_{k-1}}} \zeta(\mathbf{t},u)du d\mathbf{t} \\
    \ll & T(\log T)^{(m-1)^2}E^k(\log \log E)^{O(1)}   \\
    &  \cdot \sum_{\substack{1\leq l_0<K \\ 1\leq l_1, \ldots l_{k-1}\leq K}}  l_0^{2m-k}\exp\Big( (2m-k-1)l_0+\sum^{k-1}_{i=0}l_i-\frac {2m-k}{2}\sum^{k-1}_{i=0}\max(l_0,l_1,\ldots, l_{i})-\frac 12\sum^{k-1}_{i=1} \sum^{k}_{j=i+1}\max(l_i,\ldots, l_{j-1} )\Big) \\
    = & T(\log T)^{(m-1)^2}E^k(\log \log E)^{O(1)}  \\
    &  \cdot \sum_{\substack{1\leq l_0<K \\ 1\leq l_1, \ldots l_{k-1}\leq K}}  l_0^{2m-k}
    \exp\Big( \frac {2m-k}{2}l_0+\frac 12\sum^{k-1}_{i=1}l_i-\frac {2m-k}{2}\sum^{k-1}_{i=1}\max(l_0,l_1,\ldots, l_{i})-\frac 12\sum^{k-1}_{i=1} \sum^{k}_{j=i+2}\max(l_i,\ldots, l_{j-1} )\Big).
\end{split}
\end{align}
   We now set $k=4$ to see that in this case deduce from the above that 
\begin{align*}
%%\label{firstcase}
\begin{split}
     &  \frac {2m-k}{2}l_0+\frac 12\sum^{k-1}_{i=1}l_i-\frac {2m-k}{2}\sum^{k-1}_{i=1}\max(l_0,l_1,\ldots, l_{i})-
       \frac 12\sum^{k-1}_{i=1} \sum^{k}_{j=i+2}\max(l_i,\ldots, l_{j-1} ) \\
       \ll &  -(2m-4)\max(l_0,\ldots, l_{k-1} ). 
\end{split}
\end{align*}   
   
     We deduce from \eqref{firstcase} and the above that 
\begin{align}
\label{firstcasesimplified}
\begin{split}
       &  \sum_{\substack{1\leq l_0<K \\ 1\leq l_1, \ldots l_{k-1}\leq K}}  l_0^{2m-k} |\mathcal{B}_{l_0}|^{2m-k-1} \int_{\mathcal{C}_{l_0,l_1, \cdots, l_{k-1}}} \zeta(\mathbf{t},u)du d\mathbf{t} \\
     \ll &   T(\log T)^{(m-1)^2}E^k(\log \log E)^{O(1)} \sum_{\substack{1\leq l_0<K \\ 1\leq l_1, \ldots l_{k-1}\leq K}}  l_0^{2m-k}
     \exp\Big( -(2m-4)\max(l_0,\ldots, l_{k-1} )\Big)  \\
    \ll &  T(\log T)^{(m-1)^2}E^k(\log \log E)^{O(1)},
\end{split}
\end{align}   
      where the last estimation above follows by noting that we have $m>2$. 
     
\textbf{Case 2} $l_0=K$. The volume of the region $\mathcal{C}_{K,l_1, \cdots, l_{k-1}}$ is $\ll E^{k+1}\frac{e^{l_1+\cdots+l_{k-1}} }{(\log T)^{k-1}}$. For each $1 \leq i \leq k$, 
we have $g(|t_i- u|)\ll \log\log  E$.  Also, similar to Case 1, we have $g(|t_j- t_i|) \ll \frac{\log T}{e^{\max(l_i,\ldots, l_{j-1} ) } } $ for $1 \leq i \leq k$.
\begin{align*}
     & \zeta(\mathbf{t},u) \\
     \ll & T(\log T)^{((2m-k)^2+k)/4}(\log \log E)^{O(1)}
     \bigg(\prod^{k-1}_{i=1} \prod^{k}_{j=i+1}\frac{\log T}{e^{\max(l_i,\ldots, l_{j-1} ) } } \bigg)^{1/2} \\
     = & T(\log T)^{((2m-k)^2+k^2)/4}(\log \log E)^{O(1)}  \exp\Big( -\frac 12\sum^{k-1}_{i=1} \sum^{k}_{j=i+1}\max(l_i,\ldots, l_{j-1} )\Big).
\end{align*}
  As $|\mathcal{B}_K|\ll E$, we see that
\begin{align}
\label{secondcase}
\begin{split}
     & \sum_{\substack{1\leq l_1, \ldots l_{k-1}\leq K}}  K^{2m-k} |\mathcal{B}_{K}|^{2m-k-1} \int_{\mathcal{C}_{K,l_1, \cdots, l_{k-1}}} \zeta(\mathbf{t},u)du d\mathbf{t}  \\
     \ll & T(\log T)^{((2m-k)^2+k^2)/4-k+1}E^{2m}(\log \log E)^{O(1)}(\log\log T)^{O(1)} \sum_{\substack{1\leq l_1, \ldots l_{k-1}\leq K}} 
     \exp\Big( \sum^{k-1}_{i=1}l_i-\frac 12\sum^{k-1}_{i=1} \sum^{k}_{j=i+1}\max(l_i,\ldots, l_{j-1} )\Big). 
\end{split}
\end{align}

    We now set $k=4$ to see that in this case deduce from the above that 
\begin{align*}
%%\label{firstcase}
\begin{split}
     &   \sum^{k-1}_{i=1}l_i-\frac 12\sum^{k-1}_{i=1} \sum^{k}_{j=i+1}\max(l_i,\ldots, l_{j-1} ) 
       \ll   l_1/2.
\end{split}
\end{align*}   

   We deduce from \eqref{secondcase} and the above that 
\begin{align}
\label{secondcasesimplified}
\begin{split}
     & \sum_{\substack{1\leq l_1, \ldots l_{k-1}\leq K}}  K^{2m-k} |\mathcal{B}_{K}|^{2m-k-1} 
     \int_{\mathcal{C}_{K,l_1, \cdots, l_{k-1}}} \zeta(\mathbf{t},u)du d\mathbf{t}  \\
     \ll & T(\log T)^{((2m-3)^2+3^2)/4-2}E^{2m}(\log \log E)^{O(1)}(\log\log T)^{O(1)} \sum_{\substack{1\leq l_1, \ldots l_{k-1}\leq K}} 
     \exp\Big( l_1/2 \Big) \\
     \ll &   T(\log T)^{m^2-3m+3}E^{2m}(\log \log E)^{O(1)}(\log\log T)^{O(1)}.
\end{split}
\end{align}
   We now deduce the estimation in \eqref{finiteintest} using \eqref{firstcasesimplified} and \eqref{secondcasesimplified}. This completes the proof of the proposition.
\end{proof}

\section{Proof of Theorem \ref{quadraticmean}}
\label{sec: mainthm}

\subsection{Initial treatments}

   As we explained in the paragraph below Theorem \ref{quadraticmean}, it suffices to establish \eqref{mainestimation}.  We let $\Phi_U(tx)$ be a non-negative smooth function supported on $(0,1)$,
    satisfying $\Phi_U(x)=1$ for $t \in (1/U, 1-1/U)$ with $U$ a parameter to be chosen later and such that
    $\Phi^{(j)}_U(x) \ll_j U^j$ for all integers $j \geq 0$. We denote the Mellin transform of $\Phi_U$ by $\widehat{\Phi}_U$ and we
    observe that repeated integration by parts gives that, for any integer $i \geq 1$ and $\Re(s) \geq 1/2$,
\begin{align}
\label{whatbound}
 \widehat{\Phi}_U(s)  \ll  U^{i-1}(1+|s|)^{-i}.
\end{align}

    We insert the function $\Phi_U(\frac nY)$ into the definition of $S_m(T,Y)$ and apply the triangle inequality to obtain that 
\begin{align}
\label{charintinitial}
\begin{split}
S_m(T,Y) \leq &
\int^{2T}_T\Big | \sum_{n}n^{-it}\Phi_U(\frac {n}{Y})\Big |^{2m}dt
  +\int^{2T}_T\bigg|\sum_{n\leq Y} n^{-it}\big (1-\Phi_U(\frac {n}{Y})\big )\bigg|^{2m}dt.
\end{split}
\end{align} 	

  We further apply the Mellin inversion to obtain that
\begin{align*}
%%\label{charintinitial1}
\begin{split}
\int^{2T}_T\Big | \sum_{n}n^{-it}\Phi_U(\frac {n}{Y})\Big |^{2m}dt=&
\int^{2T}_T\Big | \int\limits_{(2)}\zeta(s+it)Y^s\widehat{\Phi}_U(s)ds\Big |^{2m}dt.
\end{split}
\end{align*} 	

   Observe that by \cite[Corollary 5.20]{iwakow} that under RH, we have for $\Re(s) \geq 1/2$ and any $\varepsilon>0$,
\begin{align}
\label{Lbound}
 \zeta(s) \ll |s|^{\varepsilon}.
\end{align}

   The bounds in  \eqref{whatbound} and \eqref{Lbound} allow us to shift the line of integration in \eqref{charintinitial} to $\Re(s)=1/2$ to obtain that
\begin{align}
\label{charint}
\begin{split}
\int^{2T}_T\Big | \sum_{n}n^{-it}\Phi_U(\frac {n}{Y})\Big |^{2m}dt = &
\int^{2T}_T\Big | \int\limits_{(1/2)}\zeta(s+it)Y^s\widehat{\Phi}_U(s)ds\Big |^{2m}dt.
\end{split}
\end{align} 	

  We split the last integral above according to whether $|\Im(s)| \leq (\log T)^D$ or not for some $D>0$ to be specified later, obtaining
\begin{align*}
%%\label{charintinitial1}
\begin{split}
&\int^{2T}_T\Big | \sum_{n}n^{-it}\Phi_U(\frac {n}{Y})\Big |^{2m}dt \\
\ll &
\int^{2T}_T\Big | \int\limits_{\substack{ (1/2) \\ |\Im(s)| \leq (\log T)^D}}\zeta(s+it)Y^s\widehat{\Phi}_U(s)ds\Big |^{2m}dt+
\int^{2T}_T\Big | \int\limits_{\substack{ (1/2) \\ |\Im(s)| > (\log T)^D}}\zeta(s+it)Y^s\widehat{\Phi}_U(s)ds\Big |^{2m}dt.
\end{split}
\end{align*} 	

   We now set $U=(\log T)^C$ to deduce from \eqref{charintinitial}, \eqref{charint} and the above that  
\begin{align}
\label{Ssimplified}
\begin{split}
S_m(T,Y) \ll &
 \int^{2T}_T\Big | \int\limits_{\substack{ (1/2) \\ |\Im(s)| \leq (\log T)^D}}\zeta(s+it)Y^s\widehat{\Phi}_U(s)ds\Big |^{2m}dt+
\int^{2T}_T\Big | \int\limits_{\substack{ (1/2) \\ |\Im(s)| > (\log T)^D}}\zeta(s+it)Y^s\widehat{\Phi}_U(s)ds\Big |^{2m}dt\\
& +\int^{2T}_T\bigg|\sum_{n\leq Y} n^{-it}\big (1-\Phi_U(\frac {n}{Y})\big )\bigg|^{2m}dt
   \\
   \ll & Y^m\int^{2T}_T
   \Big | \int\limits_{\substack{ (1/2) \\ |s| \leq (\log T)^D}}\Big |\zeta(1/2+i(s+t))
   \Big |\frac 1{1+|s|}ds\Big |^{2m}dt+
\int^{2T}_T\Big | \int\limits_{\substack{ (1/2) \\ |\Im(s)| > (\log T)^D}}\zeta(s+it)Y^s\widehat{\Phi}_U(s)ds\Big |^{2m}dt\\
&+\int^{2T}_T\bigg|\sum_{n\leq Y} n^{-it}\big (1-\Phi_U(\frac {n}{Y})\big )\bigg|^{2m}dt.
\end{split}
\end{align}

  It follows from the above that in order to establish Theorem \ref{quadraticmean}, it remains to prove the following results.
\begin{lemma}
\label{Lsmoothslarge}
With the notation as above and assume the truth of RH. We have for $D \gg C$ large enough and any real number $m >2$, 
\begin{equation}
\label{Lsmoothestslarge}
   \int^{2T}_T\Big | \int\limits_{\substack{ (1/2) \\ |\Im(s)| > (\log T)^D}}\zeta(s+it)Y^s\widehat{\Phi}_U(s)ds\Big |^{2m}dt \ll TY^m.
\end{equation}
\end{lemma}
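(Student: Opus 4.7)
Parametrize the contour $\Re(s)=1/2$ by $s=1/2+iv$, so that $|Y^s|=Y^{1/2}$; taking absolute values inside and pulling the $2m$-th power outside yields a prefactor $Y^m$, reducing \eqref{Lsmoothestslarge} to the claim that
\[\int_T^{2T}\Big(\int_{|v|>(\log T)^D}|\zeta(\tfrac12+i(v+t))|\cdot|\widehat\Phi_U(\tfrac12+iv)|\,dv\Big)^{2m}dt\ll T.\]
With $U=(\log T)^C$, the smoothness bound \eqref{whatbound} applied with a fixed large integer $N\ge 2$ gives $|\widehat\Phi_U(\tfrac12+iv)|\le w(v):=C_N(\log T)^{C(N-1)}(1+|v|)^{-N}$.

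Next apply H\"older's inequality in $v$ with exponents $2m$ and $\tfrac{2m}{2m-1}$, splitting the weight as $w=w^{1/(2m)}\cdot w^{(2m-1)/(2m)}$, to bound the $2m$-th power of the inner integral by
\[\Big(\int_{|v|>(\log T)^D}w(v)\,dv\Big)^{2m-1}\int_{|v|>(\log T)^D}|\zeta(\tfrac12+i(v+t))|^{2m}w(v)\,dv.\]
Interchange the orders of integration. The key quantity is $\int_T^{2T}|\zeta(\tfrac12+i(v+t))|^{2m}\,dt$, to be estimated in two regimes. For $(\log T)^D<|v|\le(1-\varepsilon)T$, Lemma \ref{prop: upperbound} applied with $k=1$, $a_1=2m$, $b_1=v$ gives $\ll T(\log T)^{O_m(1)}$. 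For $|v|>(1-\varepsilon)T$ the shift is too large for Lemma \ref{prop: upperbound}, so I would fall back on the RH-Lindel\"of bound \eqref{Lbound} to get the cruder estimate $\ll T(1+|v|)^{2m\varepsilon}$ with $\varepsilon>0$ arbitrarily small.

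Combining these with the direct computation $\int_{|v|>(\log T)^D}w(v)\,dv\ll(\log T)^{(C-D)(N-1)}$ yields a bulk contribution of order $T(\log T)^{(C-D)(N-1)(2m)+O_m(1)}$ plus a tail of order $T(\log T)^{(C-D)(N-1)(2m-1)+C(N-1)}T^{1+2m\varepsilon-N}$. Fix $N\ge 2$ and choose $\varepsilon>0$ small enough that $1+2m\varepsilon-N<0$, making the tail $\ll T^{-\delta}$ for some $\delta>0$; then choose $D$ so much larger than $C$ that $(D-C)(N-1)(2m)$ exceeds the implicit $O_m(1)$ exponent, driving the bulk to $\ll T$. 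Restoring the $Y^m$ prefactor gives \eqref{Lsmoothestslarge}.

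The main technical obstacle is precisely the tail regime $|v|>(1-\varepsilon)T$, where the shift-size restriction of Lemma \ref{prop: upperbound} is violated and one cannot use the sharp log-power moment bound. This region must be controlled by the polynomial decay of $\widehat\Phi_U$ alone, and it is exactly this consideration that explains the parameter choices $U=(\log T)^C$ and $D\gg C$ in the statement: the extra $(1+|v|)^{-N}$ factor with $N$ fixed large has to absorb both the crude Lindel\"of factor $(1+|v|)^{2m\varepsilon}$ on the tail and the $(\log T)^{O_m(1)}$ loss from the sharp zeta-moment estimate on the bulk.
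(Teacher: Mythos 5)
Your proof takes essentially the same route as the paper: H\"older with the weight splitting $w=w^{1/(2m)}w^{(2m-1)/(2m)}$, the rapid decay of $\widehat\Phi_U$ on $|\Im(s)|>(\log T)^D$, a $2m$-th moment bound for $\zeta$ on the bulk, and the RH--Lindel\"of pointwise bound on the far tail; the only cosmetic difference is that the paper splits at $|s|=5T$ and bounds the bulk by a change of variable plus Soundararajan's unshifted moment estimate rather than invoking Lemma~\ref{prop: upperbound} directly. One minor slip: your tail contribution is of size $T^{2+2m\varepsilon-N}(\log T)^{O(1)}$, which for fixed $N\ge 2$ and $\varepsilon<1/(2m)$ is $o(T)$ but not $\ll T^{-\delta}$ as you claimed; the final bound $\ll T$ (hence $\ll TY^m$ after restoring the prefactor) is unaffected.
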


\begin{lemma}
\label{Lsmooth}
With the notation as above and assume the truth of RH. We have for $D$ large enough and any real number $m>2$, 
\begin{equation}
\label{Lsmoothest}
\int^{2T}_T
   \Big | \int\limits_{\substack{ (1/2) \\ |s| \leq (\log T)^D}}\Big |\zeta(1/2+i(s+t))\Big |\frac 1{1+|s|}ds\Big |^{2m}dt \ll T(\log T)^{(m-1)^2}.
\end{equation}
\end{lemma}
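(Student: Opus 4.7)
The strategy is a dyadic decomposition of the inner $s$-integral by size of $|s|$, followed by application of Proposition \ref{t3prop} on each piece, reassembled via a weighted Hölder inequality chosen so as to avoid any $(\log\log T)^{O(1)}$ overhead.

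Parametrising $s = \tfrac12 + iu$ on the critical line, the inner integral in \eqref{Lsmoothest} is (up to harmless constants) $\int_{|u| \leq (\log T)^D} |\zeta(\tfrac12 + i(u+t))|\,\tfrac{du}{1+|u|}$. I split the region $\{|u|\le (\log T)^D\}$ into a bounded piece $\{|u|\le 10\}$ together with dyadic annuli $\{E_{j-1}\le |u|\le E_j\}$ for $1\le j\le J$, where $E_j = 10\cdot 2^j$ and $J\asymp \log\log T$. On each annulus the weight $1/(1+|u|)$ is $\asymp 1/E_j$, and after reflecting $u\mapsto -u$ to separate the two sign contributions I obtain the pointwise bound
$$A_j(t) \ll \frac{1}{E_j}\bigl(I_j^+(t) + I_j^-(t)\bigr), \qquad I_j^{\pm}(t) = \int_0^{E_j} |\zeta(\tfrac12 + i(\pm u + t))|\,du,$$
with the bounded piece $|u|\le 10$ handled analogously with $E_0 = 10$.

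To raise $A(t) = \sum_j A_j(t)$ to the $2m$-th power without picking up a $(\log\log T)^{O(1)}$ factor from Hölder, I weight with the geometrically decaying sequence $E_j^{-a/(2m-1)}$ for a small parameter $a>0$, yielding $A(t)^{2m} \ll \sum_j E_j^{a}A_j(t)^{2m}$ since the Hölder weight-sum $\sum_j E_j^{-a/(2m-1)}$ is a convergent geometric series. Integrating in $t$, using $(x+y)^{2m}\ll x^{2m}+y^{2m}$, and applying Proposition \ref{t3prop} to each $I_j^{\pm}$ gives
$$E_j^{a}\int_T^{2T} A_j(t)^{2m}\,dt \ll T\left(\frac{(\log T)^{(m-1)^2}(\log\log E_j)^{O(1)}}{E_j^{\,2m-3-a}} + (\log T)^{m^2-3m+3}(\log\log T)^{O(1)}\,E_j^{a}\right).$$

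Summing over $j$ finishes the argument. For $m>2$ and $a$ sufficiently small we have $2m-3-a>0$, so the first term's $j$-sum is a convergent geometric series (the slow growth of $(\log\log E_j)^{O(1)}$ cannot spoil this), contributing $T(\log T)^{(m-1)^2}$. The second term's $j$-sum is dominated by the top level $E_J^{a}\asymp (\log T)^{Da}$ and thus contributes $T(\log T)^{m^2-3m+3+Da}(\log\log T)^{O(1)}$; the hypothesis $m>2$ supplies the strict gap $(m-1)^2-(m^2-3m+3) = m-2 > 0$, which, upon choosing $a < (m-2)/D$, is wide enough to absorb both $Da$ and the residual $(\log\log T)^{O(1)}$ overhead. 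The main obstacle is precisely this simultaneous calibration of the Hölder parameter $a$ against the dyadic cutoff $D$ and the exponents appearing in Proposition \ref{t3prop}; the assumption $m>2$ is exactly what makes both $a<2m-3$ and $a<(m-2)/D$ jointly attainable for any fixed $D$, yielding the claimed bound $\ll T(\log T)^{(m-1)^2}$.
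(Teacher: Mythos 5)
Your proof is correct and takes essentially the same route as the paper's: both decompose the inner $s$-integral into nested shells, apply a weighted H\"older inequality to reduce to moments over individual shells, invoke Proposition \ref{t3prop} on each, and use $m>2$ to absorb the contribution of the largest shells. The only difference is cosmetic---the paper uses $e$-adic shells with polynomial H\"older weights $n^{-2m/(2m-1)}$ (so the top-shell contribution carries only a $(\log\log T)^{O(1)}$ overhead), whereas your geometric weights $E_j^{-a/(2m-1)}$ produce a $(\log T)^{Da}$ overhead and hence require the extra calibration $a<(m-2)/D$; in both cases the loss is absorbed by the gap $(m-1)^2-(m^2-3m+3)=m-2>0$.
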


\begin{lemma}
\label{fdiff}
With the notation as above and assume the truth of RH. We have for $C$ large enough and any real number $m>2$,  
\begin{equation}
\label{theorem3firstrest}
\int^{2T}_T\bigg|\sum_{n\leq Y} n^{-it}\big (1-\Phi_U(\frac {n}{Y})\big )\bigg|^{2m}dt \ll TY^m.
\end{equation}
\end{lemma}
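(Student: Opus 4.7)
The plan is to represent the sum by Mellin inversion as a contour integral of $\zeta(s+it)$ against the Mellin transform of the function $h(x) = (1 - \Phi_U(x))\mathbf{1}_{(0,1]}(x)$, and then to bound the resulting integral using the zeta moment estimates from Section \ref{sec 2}. The Mellin transform of $h$ on $\Re s > 0$ is
\begin{equation*}
\widehat{h}(s) = \int_0^1 (1 - \Phi_U(x)) x^{s-1}\,dx = \frac{1}{s} - \widehat{\Phi}_U(s),
\end{equation*}
and Mellin inversion gives, for $c > 1$,
\begin{equation*}
\sum_{n \leq Y} n^{-it}\big(1 - \Phi_U(n/Y)\big) = \frac{1}{2\pi i}\int_{(c)}\zeta(s+it)\, Y^s\, \widehat{h}(s)\,ds
\end{equation*}
(the boundary contribution from $n$ possibly equal to $Y$ is $O(1)$ and negligible after $2m$-th power). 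First I would shift the contour to $\Re s = 1/2$, picking up the residue of $\zeta(s+it)$ at its simple pole $s = 1-it$, namely $Y^{1-it}\widehat{h}(1-it)$. By \eqref{whatbound} with $i=1$ we have $|\widehat{\Phi}_U(1-it)| \ll 1/|t|$, hence $|\widehat{h}(1-it)| \ll 1/|t|$, so the residue contributes $\ll Y/T$ pointwise for $t \in [T, 2T]$ and its $2m$-th moment is bounded by $T(Y/T)^{2m} = Y^{2m}T^{1-2m} \ll TY^m$ since $Y \leq T$.

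The heart of the argument is the contour integral on $\Re s = 1/2$, for which the crucial input is a sharp bound on $\widehat{h}$ along the critical line. I would establish
\begin{equation*}
|\widehat{h}(\tfrac{1}{2}+ib)| \ll \frac{1}{U^{1/2}+|b|}
\end{equation*}
by two complementary estimates: (i) directly, $|\widehat{h}(1/2+ib)| \leq \int_0^1 |h(x)|x^{-1/2}\,dx \ll U^{-1/2}$, coming from the fact that $h$ is supported on a set of measure $\ll 1/U$; (ii) for large $|b|$, combining $|1/s| \leq 1/|b|$ with the rapid decay $|\widehat{\Phi}_U(1/2+ib)| \ll U^{i-1}(1+|b|)^{-i}$ from \eqref{whatbound} yields $|\widehat{h}(1/2+ib)| \ll 1/|b|$. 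With this bound in hand I would split the integral at $|b| = (\log T)^D$: the middle range is controlled via Proposition \ref{t3prop} applied with $E$ of order $U^{1/2}$, so that the $U^{-1/2}$ saving in the weight compensates for the growth factors in the proposition; the tail $|b| > (\log T)^D$ is handled as in Lemma \ref{Lsmoothslarge}, using $|\widehat{h}(1/2+ib)| \ll 1/|b|$ together with the super-polynomial decay of $\widehat{\Phi}_U$ for $|s| \gtrsim U$.

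The main obstacle is establishing the sharp bound on $\widehat{h}(1/2+ib)$ with the correct $U^{-1/2}$ decay at small $|b|$: the naive estimate $|\widehat{h}(s)| \leq |1/s| + |\widehat{\Phi}_U(s)|$ misses the near-cancellation between these two terms and gives only $\ll 1/(1+|b|)$, which would cost a factor of $(\log T)^{(m-1)^2}$ via Lemma \ref{Lsmooth}. Extracting this cancellation---which reflects that $\Phi_U$ is a close smooth approximation to the indicator function of $(0,1)$---is what makes the error term in \eqref{Ssimplified} of smaller order than the main contribution to $S_m(T, Y)$.
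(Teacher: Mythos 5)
Your approach is genuinely different from the paper's, but it has two gaps that prevent it from establishing the bound $\ll TY^m$.

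The paper handles Lemma~\ref{fdiff} by Cauchy--Schwarz, splitting the $L^{2m}$ moment into an $L^2$ factor and an $L^{4m-2}$ factor. The $L^2$ factor is bounded by $\ll TY/U$ via the classical mean-value theorem for Dirichlet polynomials applied to the coefficients $a_n = 1-\Phi_U(n/Y)$, exploiting that these are supported on $\ll Y/U$ values of $n$; this is where the $1/U$ saving comes from, and no shifted-moment machinery is required. The $L^{4m-2}$ factor is bounded by $\ll TY^{2m-1}(\log T)^{O(1)}$ via truncated Perron and the rest of the paper's apparatus. Combining gives $\ll TY^m (\log T)^{O(1)} / U^{1/2} \ll TY^m$ once $U = (\log T)^C$ with $C$ large.

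The first gap in your proposal is one of convergence. The kernel $\widehat{h}(s)=1/s - \widehat{\Phi}_U(s)$ decays only like $1/|s|$ for $|s|\gg U$, since the $\widehat{\Phi}_U$ term decays rapidly but the $1/s$ does not. Consequently $\int_{|\Im s|>(\log T)^D}|\widehat{h}(\tfrac12+i\Im s)|\,|\dif s|$ diverges, and the Hölder device from the proof of Lemma~\ref{Lsmoothslarge}, which crucially uses finiteness of $\int|\widehat{\Phi}_U|$, is not applicable to $\widehat{h}$. Nor is the inversion integral on $\Re s=c>1$ absolutely convergent. This is precisely why one uses a truncated Perron formula with explicit error terms rather than naive Mellin inversion against $\mathbf{1}_{(0,1]}$.

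The second gap is quantitative and more serious: the $U^{-1/2}$ decay of $\widehat{h}$ does not save the second term of Proposition~\ref{t3prop}. Applying the proposition with $E\asymp U^{1/2}$ and multiplying by the weight contribution $(U^{-1/2})^{2m}=U^{-m}$ yields
\[
 U^{-m}\,T\Big((\log T)^{(m-1)^2}E^3 + (\log T)^{m^2-3m+3}E^{2m}\Big)(\log\log T)^{O(1)}
 \asymp T(\log T)^{(m-1)^2}U^{3/2-m}(\log\log T)^{O(1)} + T(\log T)^{m^2-3m+3}(\log\log T)^{O(1)}.
\]
The first term can be made $\ll T$ by choosing $C$ large (since $m>3/2$), but the second term has no $U$-dependence whatsoever: $E^{2m}=U^m$ cancels the $U^{-m}$ exactly. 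Since $m^2-3m+3>0$ for every real $m$, your argument gives at best $\ll TY^m(\log T)^{m^2-3m+3}(\log\log T)^{O(1)}$, not the claimed $\ll TY^m$. (For $m>2$ this weaker bound is still $\ll TY^m(\log T)^{(m-1)^2}$, so it would suffice for Theorem~\ref{quadraticmean}, but it does not prove Lemma~\ref{fdiff} as stated.) The paper's Cauchy--Schwarz route avoids this obstruction because the $L^2$ mean-value theorem delivers a genuine $1/U$ saving without ever invoking the shifted-moment bound near $E\asymp U^{1/2}$, where the $E^{2m}$ term is unavoidable.
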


\subsection{Proof of Lemma \ref{Lsmoothslarge}}

  We apply \eqref{whatbound} and H\"older's inequality to deduce that, as $m \geq 1/2$, 
\begin{align}
\label{LintImslarge}
\begin{split}
 &\int^{2T}_T\Big | \int\limits_{\substack{ (1/2) \\ |\Im(s)| > (\log T)^D}}\zeta(s+it)Y^s\widehat{\Phi}_U(s)ds\Big |^{2m}dt\\
  \ll &\int^{2T}_T\Big ( \Big ( \int\limits_{\substack{ (1/2) \\ |\Im(s)| > (\log T)^D}}\Big | \widehat{\Phi}_U(s) \Big| |\dif s| \Big )^{2m-1} \cdot
  \int\limits_{\substack{ (1/2) \\ |\Im(s)| > (\log T)^D}}\Big |\zeta(s+it)Y^s \Big|^{2m} \Big| \widehat{\Phi}_U(s)\Big | |\dif s|\Big ) dt  \\
 \ll & Y^m\Big (\int\limits_{\substack{ (1/2) \\ |\Im(s)| > (\log T)^D}}\Big | \widehat{\Phi}_U(s) \Big| |\dif s| \Big )^{2m-1} \cdot
 \int\limits_{\substack{ (1/2) \\ |\Im(s)| > (\log T)^D}}\int^{2T}_T\Big |\zeta(s+it)\Big|^{2m} \Big| \widehat{\Phi}_U(s)\Big | dt|\dif s|.
\end{split}
\end{align} 	

   Now we note that by \eqref{whatbound},
\begin{align}
\label{phiint}
\begin{split}
  \int\limits_{\substack {(1/2) \\ |\Im(s)| > (\log T)^D}}\Big | \widehat{\Phi}_U(s) \Big| |\dif s|
 \ll & \int\limits_{\substack {(1/2) \\ |\Im(s)| > (\log T)^D}}\frac U{1+|s|^2} |\dif s| \ll_D \frac {U}{(\log T)^D}.
\end{split}
\end{align}

  We also apply \eqref{Lbound} to see that
\begin{align}
\label{Lintslarge}
\begin{split}
  & \int\limits_{\substack {(1/2) \\ |\Im(s)| > (\log T)^D}}\int^{2T}_T
  \Big | \zeta(s+it)\Big |^{2m} \cdot \Big |\widehat{\Phi}_U(s)\Big | dt|\dif s| \\
  \ll &  \int\limits_{\substack {(1/2) \\ |s| > (\log T)^D}}\int^{2T}_T\Big | \zeta(1/2+i(s+t))\Big |^{2m}\cdot\frac U{1+|s|^2}  dt|\dif s| \\
  \ll & \int\limits_{\substack {(1/2) \\ (\log T)^D< |s| \leq  5T}}\int^{2T}_T\Big | \zeta(1/2+i(s+t))\Big |^{2m}\cdot\frac U{1+|s|^2}  dt|\dif s|+\int\limits_{\substack {(1/2) \\  |s| >  5T}}\int^{2T}_T\Big | \zeta(1/2+i(s+t))\Big |^{2m}\cdot\frac U{1+|s|^2}  dt|\dif s| \\
   \ll & \int\limits_{\substack {(1/2) \\ (\log T)^D< |s| \leq  5T}}\int^{2T}_T\Big | \zeta(1/2+i(s+t))\Big |^{2m}\cdot\frac U{1+|s|^2}  dt|\dif s|+\int\limits_{\substack {(1/2) \\  |s| >  5T}}\int^{2T}_T \Big |s+t\Big |^{\varepsilon}\cdot\frac U{1+|s|^2}  dt|\dif s| \\
   \ll & \int\limits_{\substack {(1/2) \\ (\log T)^D< |s| \leq  5T}}\int^{2T}_T\Big | \zeta(1/2+i(s+t))\Big |^{2m}\cdot\frac U{1+|s|^2} dt |\dif s|+\int\limits_{\substack {(1/2) \\  |s| >  5T}}\int^{2T}_T \Big |s\Big |^{\varepsilon}\cdot\frac U{1+|s|^2} dt |\dif s| \\
   \ll & \int\limits_{\substack {(1/2) \\ (\log T)^D< |s| \leq  5T}}\int^{2T}_T\Big | \zeta(1/2+i(s+t))\Big |^{2m}\cdot\frac U{1+|s|^2} dt |\dif s|+U T^{\varepsilon} \\
    \ll & \int\limits_{\substack {(1/2) \\ (\log T)^D< |s| \leq  5T}}\int^{10T}_{-10T}\Big | \zeta(1/2+it)\Big |^{2m}\cdot\frac U{1+|s|^2} |\dif t| |\dif s|+U T^{\varepsilon} \\
    \ll & TU(\log T)^{O(1)}(\log T)^{-D}+U T^{\varepsilon},
\end{split}
\end{align}
   where the last estimation above follows from \cite[Corollary B]{Sound2009}, which asserts that  
\begin{align*}
%%\label{zetaint}
\begin{split}
 \int^{10T}_{-10T}\Big | \zeta(1/2+it)\Big |^{2m}||\dif t| \ll  T(\log T)^{O(1)}.
\end{split}
\end{align*}    

   We now deduce the estimation given in \eqref{Lsmoothestslarge} from \eqref{LintImslarge}-\eqref{Lintslarge}, upon taking $D \gg C$ large enough. This completes the proof of the lemma. 

\subsection{Proof of Lemma \ref{Lsmooth}}

  We deduce from \eqref{Ssimplified} by symmetry and H\"older's
inequality that,
\begin{align*}
%%\label{Lintsmall}
\begin{split}
  & \Big | \int\limits_{\substack{ (1/2) \\ |s| \leq (\log T)^D}}\Big |\zeta(1/2+i(s+t))|\frac 1{1+|s|}ds\Big |^{2m} \\
  \ll & \Big |\int_0^{(\log T)^D} \frac{|\zeta(1/2+i(s+t))|}{t+1}dt\Big |^{2m} \\
   \leq & \bigg(\sum_{n\leq D\log \log T+1} n^{-2m/(2m-1)} \bigg)^{2m-1}
    \sum_{n\leq  D\log \log T+1} \bigg(n\int_{e^{n-1}-1}^{e^{n}-1 } \frac{|\zeta(1/2+i(s+t)) |}{s+1} ds\bigg)^{2m}   \\
    \ll & \sum_{n\leq  D\log \log T+1} \frac{n^{2m} }{e^{2nm} } \bigg( \int_{e^{n-1}-1}^{e^{n}-1 } |\zeta(1/2+i(s+t)) | ds \bigg)^{2m}.
\end{split}
\end{align*}

   We apply Proposition \ref{t3prop} to see that for any integer $k \geq 1$ and any real numbers $2m \geq k+1, \varepsilon>0$, 
\begin{align*}
  & \sum_{n\leq D\log \log T+1} \frac{n^{2m} }{e^{2nm} }\int^{2T}_T \bigg( \int_{e^{n-1}-1}^{e^{n}-1 } |\zeta(1/2+i(s+t)) | ds \bigg)^{2m}dt
    \\
     \ll & T\sum_{n\leq D\log \log T+1} \frac{n^{2m} }{e^{2nm} }
     \Big((\log T)^{(m-1)^2}e^{kn}(\log 2n)^{O(1)}+(\log T)^{m^2-3m+3}(\log 2n)^{O(1)}(\log\log T)^{O(1)} e^{2mn}\Big) \\
  \ll & T(\log T)^{(m-1)^2}.
\end{align*}
   We now deduce from the above
 that \eqref{Lsmoothest} holds. This completes the proof of the lemma.

\subsection{Proof of Lemma \ref{fdiff}}  We apply the Cauchy-Schwarz inequality to see that
\begin{align}
\label{pocs1}
\begin{split}
   &\int^{2T}_T\bigg|\sum_{n\leq Y} n^{-it}\big (1-\Phi_U(\frac {n}{Y})\big )\bigg|^{2m}dt \\
    \leq & \bigg(\int^{2T}_T\bigg|\sum_{n\leq Y} n^{-it}\big (1-\Phi_U(\frac {n}{Y})\big )\bigg|^{2}dt\bigg)^{1/2}
    \bigg(\int^{2T}_T\bigg|\sum_{n\leq Y} n^{-it}\big (1-\Phi_U(\frac {n}{Y})\big )\bigg|^{4m-2}dt\bigg)^{1/2}.
\end{split}
\end{align}
   We first note that it follows from \cite[Theorem 9.1]{iwakow} that for arbitrary complex numbers $a_n$, we have for $T, Z \geq 2$ and any
   $\varepsilon>0$,
\begin{align*}
%%\label{largesievesquarefree}
\begin{split}
  &\int^{2T}_T\bigg|\sum_{n\leq Z} a_nn^{-it}\bigg|^{2} dt \ll  (T+Z)\sum_{\substack{n \leq Z}}|a_n|^2.
\end{split}
\end{align*}

   We apply the above to $Z=Y$ and keep in mind our assumption that $Y \leq T$ to see that
\begin{align}
\label{pocs2}
\begin{split}
  &\int^{2T}_T\bigg|\sum_{n\leq Y} n^{-it}\big (1-\Phi_U(\frac {n}{Y})\big )\bigg|^{2}dt \ll  (T+Y)\sum_{\substack{n \leq Y } }\big (1-\Phi_U(\frac {n}{Y})\big )^2 
  \ll 
  T\sum_{\substack{Y(1-1/U) \leq n\leq Y} }1+T\sum_{\substack{0 \leq n\leq Y/U} }1 \ll \frac {TY}{U}.
\end{split}
\end{align}

 We next note that
\begin{equation}
\label{pocs4}
\int^{2T}_T\bigg|\sum_{n\leq Y} n^{-it}\big (1-\Phi_U(\frac {n}{Y})\big )\bigg|^{4m-2}dt \ll\int^{2T}_T\bigg|\sum_{n\leq Y} n^{-it}\bigg|^{4m-2}dt+\int^{2T}_T\bigg|\sum_{n\leq Y} n^{-it}\Phi_U(\frac {n}{Y})\bigg|^{4m-2}dt.
\end{equation}

  By the remark made in the paragraph below Theorem \ref{quadraticmean}, we see that
\begin{equation}
\label{pocs6}
\int^{2T}_T\bigg|\sum_{n\leq Y} n^{-it}\Phi_U(\frac {n}{Y})\bigg|^{4m-2}dt \ll TY^{2m-1}(\log T)^{O(1)}.
\end{equation}  

 To estimate the first expression on the right-hand side of \eqref{pocs4}, we apply Perron's formula as given in \cite[Corollary 5.3]{MVa1} to see that
\begin{align}
\label{Perron}
\begin{split}
    \sum_{n\leq Y}n^{-it}= & \frac 1{ 2\pi i}\int_{1+1/\log Y-iY}^{1+1/\log Y+iY}\zeta(s+it) \frac{Y^s}{s}ds +R_1+R_2, \\
     = &\frac 1{ 2\pi i}\int_{1+1/\log Y -iY}^{1/2-iY} +\frac 1{ 2\pi i}\int_{1/2-iY}^{1/2+iY}+\frac 1{ 2\pi i}\int_{1/2+iY}^{1+1/\log Y+iY} \zeta(s+it)\frac{Y^s}{s}ds +R_1+R_2,
\end{split}
\end{align}
  where
\begin{align}
\label{R12}
\begin{split}
  R_1=& O\Big (\sum_{\substack{Y/2< n <2Y  \\  n \neq Y  }}\min (1, \frac {1}{|n-Y|}) \Big )=O(\log Y), \\
   R_2=& O\Big (\frac {4^{1+1/\log Y}+Y^{1+1/\log Y}}{Y}\zeta(1+1/\log Y)\Big )=O(\log Y).
\end{split}
\end{align}
  Here the last estimation above follows from \cite[Corollary 1.17]{MVa1}.
We now consider the moments of the horizontal integrals in \eqref{Perron}. We may assume that $Y\geq 10$, otherwise the lemma is trivial. 
By symmetry we only need to consider only one of them. Note that we have $|Y^s/s|\ll 1$ in that range and $m \geq 1$, which allows us to apply Hölder's inequality to get
\begin{align}
\label{horizontalint}
\begin{split}
\int^{2T}_T\bigg| \int_{1/2+iY}^{1+1/\log Y+iY} \zeta(s+it)\frac{Y^s}{s}ds\bigg|^{4m-2} dt\ll & \int^{2T}_T\bigg( \int_{1/2+iY}^{1+1/\log Y+iY} |\zeta(s+it)| |\dif s| \bigg)^{4m-2}dt \\
 \ll &\int_{1/2+iY}^{1+1/\log Y+iY} \int^{2T}_T|\zeta(s+it)|^{4m-2} dt|\dif s| \\
 \ll & T(\log T)^{O(1)},
\end{split}
\end{align}
  where the last estimation above follows from Lemma \ref{prop: upperbound}, which implies that for $ 1/2 \leq \Re(s) \leq 1+1/\log Y$, we have under RH, 
$$\int^{2T}_T|\zeta(s+it)|^{4m-2}dt \ll T(\log T)^{O(1)}.$$

  We treat the moments of the vertical integral in \eqref{Perron} using  Hölder's inequality (by noting that $4m-2>4$), 
  Proposition \ref{t3prop} and the assumption $Y \leq (1-\varepsilon)T$ to see that
\begin{align}
\label{verticalint}
\begin{split}
    &\int^{2T}_T\bigg|\int_{1/2-iY}^{1/2+iY}\zeta(s+it) \frac{Y^s}{s}ds\bigg|^{4m-2}dt \\
  \ll & Y^{2m-1}\int^{2T}_T \bigg( \int_{0}^Y \frac{|\zeta(1/2+i(s+t)) |}{s+1} ds \bigg)^{4m-2}dt  \\
   \ll &  Y^{2m-1}\sum_{n\leq \log Y+2} \frac{n^{4m-2} }{e^{(4m-2)n}}\int^{2T}_T \bigg( \int_{e^{n-1}-1}^{e^{n}-1 } |\zeta(1/2+i(s+t)) | ds \bigg)^{4m-2}dt \\
  \ll & Y^{2m-1}T(\log T)^{O(1)}\Big ( \sum_{n\leq \log Y+2}\frac{n^{4m-2}}{e^{(4m-2)n} }e^n +\sum_{n\leq \log Y+2}n^{4m-2} \Big )\\
  \ll & Y^{2m-1}T(\log T)^{O(1)}.
\end{split}
\end{align}
  We conclude from \eqref{Perron}-\eqref{verticalint} that 
\begin{equation}
\label{pocs10}
 \int^{2T}_T\bigg|\sum_{n\leq Y} n^{-it}\bigg|^{4m-2}dt \ll Y^{2m-1}T(\log T)^{O(1)}.
\end{equation}
 We then deduce from \eqref{pocs1}-\eqref{pocs6}, \eqref{pocs10} and recall that we have $U=(\log T)^{C}, Y \leq (1-\varepsilon)T$ to see that the estimation given in \eqref{theorem3firstrest} is valid.  This completes the proof of the lemma.

\vspace*{.5cm}

\noindent{\bf Acknowledgments.}  The author is supported in part by NSFC grant 11871082. This work grows out of discussions with Changhao Chen and Nankun Hong on 
large inequalities for Dirichlet polynomials when the author visited Anhui University in April 2024. The author is debt to them for the inspiration of this paper and many helpful suggestions on the writing of the manuscript.

\bibliography{biblio}
\bibliographystyle{amsxport}

\end{document}